\documentclass[12pt]{amsart}
\usepackage{amsmath}
\usepackage{amssymb}
\usepackage{ifthen}
\usepackage{graphicx}
\usepackage{float}
\usepackage{caption}
\usepackage{subcaption}
\usepackage{cite}
\usepackage{amsfonts}
\usepackage{amscd}
\usepackage{amsxtra}
\usepackage{color}

\setlength{\textwidth}{16cm} \setlength{\oddsidemargin}{0cm}
\setlength{\evensidemargin}{0cm} \setlength{\footskip}{40pt}
\setlength{\emergencystretch}{3em}
\sloppy

\newtheorem{theorem}{Theorem}[section]
\newtheorem{lemma}[theorem]{Lemma}
\newtheorem{corollary}[theorem]{Corollary}

\theoremstyle{definition}
\newtheorem{remark}[theorem]{Remark}

\newtheorem{example}[theorem]{Example}

\numberwithin{equation}{section}

\def\be{\begin{equation}}
\def\ee{\end{equation}}

\newcounter{alphabet}
\newcounter{tmp}
\newenvironment{Thm}[1][]{\refstepcounter{alphabet}%
\bigskip%
\noindent%
{\bf Theorem \Alph{alphabet}}%
\ifthenelse{\equal{#1}{}}{}{ (#1)}%
{\bf .} \itshape}{\vskip 8pt}

\makeatletter
\newcommand{\Ref}[1]{\@ifundefined{r@#1}{}{\setcounter{tmp}{\ref{#1}}\Alph{tmp}}}
\makeatother

\begin{document}

\title{Successive coefficients for spirallike and related functions}
\author[V. Arora]{Vibhuti Arora}
\address{Discipline of Mathematics \\ Indian Institute of Technology Indore\\
Simrol, Khandwa Road\\
Indore 453 552, India}
\email{vibhutiarora1991@gmail.com}

\author[S. Ponnusamy]{Saminathan Ponnusamy}
\address{Department of Mathematics \\ Indian Institute of Technology Madras\\
Chennai 600 036, India}
\email{samy@iitm.ac.in}
\author[S. K. Sahoo]{Swadesh Kumar Sahoo}
\address{Discipline of Mathematics \\ Indian Institute of Technology Indore\\
Simrol, Khandwa Road\\
Indore 453 552, India}
\email{swadesh@iiti.ac.in}
\subjclass[2010]{30D30, 30C45, 30C50 30C55.}
\keywords{Convex functions, Close-to-convex functions, Starlike functions, Spirallike functions, Successive coefficients}

\begin{abstract}
We consider the family of all analytic and univalent functions in the unit disk of the form $f(z)=z+a_2z^2+a_3z^3+\cdots$. Our objective in this paper is to estimate the difference of the moduli of successive coefficients, that is $\big | |a_{n+1}|-|a_n|\big |$, for $f$ belonging to the family of $\gamma$-spirallike functions of order $\alpha$. Our particular results include the case of starlike and convex functions of order $\alpha$
 and other related class of functions.
\end{abstract}

\maketitle

\section{Introduction and statement of a main result}\label{2sec1}
Let us denote the family of all meromorphic functions $f$ with no poles in the unit disk $\mathbb{D}:=\{z \in \mathbb{C}:|z|<1\}$ of the form
\begin{equation}\label{deq1}
f(z)=z+a_2z^2+a_3z^3+\cdots
\end{equation}
by $\mathcal{A}$. Clearly, functions in  $\mathcal{A}$ are analytic in $\mathbb{D}$ and
the set of all univalent functions $f \in \mathcal{A}$ is denoted by $\mathcal{S}$. Functions in $\mathcal{S}$ are of interest because they appear in the Riemann mapping theorem and several other situation in many different contexts. For background knowledge on these settings we refer to the standard books \cite{AvWir-09, Dur83, Gol46, Goo83, Mil77}.

One of the popular necessary conditions for a function $f$ of the form \eqref{deq1} to be in $\mathcal{S}$
is the sharp inequality $|a_n|\leq n$ for $n\geq 2$, which was first conjectured by Bieberbach in 1916 and
proved by de Branges in 1985 (\cite{DeB1}). On the other hand, the problem of estimating sharp bound for successive coefficients,
namely, $\big | |a_{n+1}|-|a_n| \big | $, is also an
interesting necessary condition for a function to be in $\mathcal{S}$. This problem was first studied by Goluzin \cite{Gol46}
with an idea to solve the Bieberbach conjecture. Several results are known in this direction. For example, Hamilton \cite{Ham80}
proved that $\displaystyle \overline{\lim}_{n\rightarrow \infty}\big | |a_{n+1}|-|a_n| \big | \leq 1$. Prior to this paper,
Hayman \cite{Hay63} proved in 1963 that
\begin{equation}\label{deq5}
\big | |a_{n+1}|-|a_n| \big | \leq A, \quad n=1,2,3,\dots,
\end{equation}
where $A\geq 1$ is an absolute constant, for functions $f$ in $\mathcal{S}$ of the form \eqref{deq1}.
Milin \cite{Mil68,Mil77} found a simpler approach, which led to the bound $A\leq 9$ and  Ilina \cite{Ili68}
improved this to $A\leq 4.26$. It is still an open problem to find the minimal value of $A$
which works for all $f \in \mathcal{S}$, however, the best known bound as of now is $3.61$ which is due to
Grinspan \cite{Gri76} (see also \cite{Mil77}). The fact that $A$ in \eqref{deq5} cannot be replaced by $1$
may be seen from the work of \cite{SS43}. On the other hand, sharp bound is known only for $n=2$ (see \cite[Theorem~3.11]{Dur83}), namely,
$$
-1\leq|a_3|-|a_2|\leq 1.029\ldots.
$$
Since Schaeffer and Spencer \cite{SS43} showed that for each $n \geq 2$ there corresponds an odd function
$h(z)=z+a_3z^3+\cdots$ in $\mathcal{S}$ with all of its coefficients real such that $|a_{2n+1}(h)|>1$, it is also
clear that the constant $A$ in \eqref{deq5} must be greater than $1$ for odd functions in the  class $\mathcal{S}$.
Note that for the Koebe function $k(z)=z/(1-z)^2$ and its rotation $e^{-i\theta}k(e^{i\theta}z)$, we have  $\big | |a_{n+1}|-|a_n| \big | =1$ for $n\geq 1$.

Denote by ${\mathcal S}^*$, the class $\mathcal S$ of functions $f$ such that $f(\mathbb{D})$ is starlike with respect to the origin.
Concerning the class ${\mathcal S}^*$,  Leung \cite{Leu78} (see also \cite{LS17}) in 1978 has proved that $A=1$ for starlike functions that was
first conjectured by Pommerenke in \cite{POM71}. More precisely, we have


\begin{Thm}\label{ThA}
{\rm \cite{Leu78}}
For every $f\in \mathcal{S}^*$ given by \eqref{deq1}, we have
$$\big | |a_{n+1}|-|a_n| \big | \leq 1, \quad n=1,2,3,\ldots.
$$
Equality occurs for fixed $n$ only for the function
$$
\frac{z}{(1-\gamma z)(1-\zeta z)}
$$
for some $\gamma$ and $\zeta$ with $|\gamma|=|\zeta|=1$.
\end{Thm}

We remark that, as an application of triangular inequality, Theorem \Ref{ThA} leads to $|a_n|\leq n$ for $n\geq 2$ which is the well known
coefficient inequality for starlike functions.
This is one of reasons for studying the successive coefficients problem in the univalent function theory.
From the above discussion, we understand the importance of finding the minimal value of $A$ for functions to be in $\mathcal{S}$.
Later, the problem of finding the minimal value of $A$ was considered for certain other subfamilies of univalent functions
such as convex, close-to-convex, and spirallike functions. Among other things, Hamilton in \cite{Ham80} has shown some
bound for successive coefficients for spirallike functions and for the class of starlike
functions of non-positive order. For convex functions, recently Li and Sugawa \cite{LS17} obtained the sharp upper
bound which is $|a_{n+1}|-|a_n|\leq1/(n+1)$ for $n \geq 2$, and for $n=2,3$ sharp lower bounds are
$1/2$ and $1/3$, respectively. For $n\geq 4$, it is still an open problem to find the best lower
bound for convex functions. These information clearly shows the level of difficulty in determining the bound on the
successive coefficients problem.

Our objective in this paper is to obtain results related to successive coefficients for starlike functions
of order $\alpha$, convex functions of order $\alpha$, spirallike functions and functions in the close-to-convex family.

To state our first result we need to introduce the following definitions:
The family $\mathcal{S}_\gamma (\alpha )$ of  $\gamma$-spirallike functions of order $\alpha$ is defined by
$$ \mathcal{S}_\gamma (\alpha)
= \left \{f\in {\mathcal A}: \, {\rm Re}  \left ( e^{-i\gamma}\frac{zf'(z)}{f(z)}\right )>\alpha \cos \gamma\,
 \mbox{ for }  z\in \mathbb{D}\right\},
$$
where $ \alpha \in [0,1)$ and $\gamma\in (-\pi/2, \pi/2)$.
Each function in $\mathcal{S}_\gamma(\alpha )$ is univalent in $\mathbb{D}$ (see \cite{Lib67}).
Clearly, $\mathcal{S}_\gamma (\alpha )\subset \mathcal{S}_\gamma (0)\subset \mathcal{S}$
whenever $0\leq \alpha <1$. Functions in $\mathcal{S}_\gamma(0)$  are called \textit{$\gamma$-spirallike}, but they do not necessarily
belong to the starlike family $\mathcal{S}^*$.
The class $\mathcal{S}_\gamma (0)$ was introduced by ${\rm \check{S}}$pa${\rm\check{c}}$ek \cite{Spacek-33}
(see also \cite{Dur83}). Moreover, $\mathcal{S}_0 (\alpha)=:\mathcal{S}^*{(\alpha)}$ is the usual class of starlike functions of order $\alpha$,
and $\mathcal{S}^*(0)=\mathcal{S}^*$. The class $\mathcal{S}^*{(\alpha)}$ is meaningful even if $\alpha <0$, although univalency will be destroyed
in this situation.

A function $f \in \mathcal{A}$ is called convex of order $\alpha$, denoted by $\mathcal{C}(\alpha)$ if and only if, for some $\alpha \in [ 0,1),$
$zf'(z)$ belongs to $\mathcal{S}^*{(\alpha)}$; i.e.
\begin{equation}\label{deq4}
{\rm Re}\left (1+\frac{zf''(z)}{f'(z)}\right)>\alpha \quad \mbox{ for $z\in{\mathbb D}$}.
\end{equation}
If $\alpha=0,$ the inequality \eqref{deq4} is equivalent to the definition of a convex function, i.e. $f$
maps $\mathbb{D}$ onto a convex domain. We set $\mathcal{C}(0)=\mathcal{C}$. It is well-known that $\mathcal C$ is a proper subset of
$\mathcal{S}^*{(1/2)}$.

We state our first result which shows that Theorem \Ref{ThA} continues to hold for $\gamma$-spirallike functions. More generally,
as a generalization and the extension of  Leung's result, we prove the following result whose proof will be presented in Section \ref{2sec4}.

\begin{theorem}\label{2thm1}
For every $f \in \mathcal{S}_\gamma (\alpha )$ of the form \eqref{deq1},
$$ \big | |a_{n+1}|-|a_n| \big | \leq \exp(-M\alpha \cos \gamma )
$$
for some absolute constant $M>0$ and \mbox{for $n \ge 2$}.
\end{theorem}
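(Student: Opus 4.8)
The plan is to transfer the spirallikeness hypothesis to a Carathéodory function and then control $\big||a_{n+1}|-|a_n|\big|$ through the third Lebedev--Milin inequality. First I would record the representation built into the definition of $\mathcal S_\gamma(\alpha)$: writing $\lambda=(1-\alpha)\cos\gamma\,e^{i\gamma}$, a short computation shows that $f\in\mathcal S_\gamma(\alpha)$ if and only if
$$\frac{zf'(z)}{f(z)}=1+\lambda\big(p(z)-1\big),$$
where $p(z)=1+\sum_{k\ge 1}p_kz^k$ satisfies $p(0)=1$ and $\mathrm{Re}\,p>0$; indeed one checks $\mathrm{Re}\big(e^{-i\gamma}zf'/f\big)=\alpha\cos\gamma+(1-\alpha)\cos\gamma\,\mathrm{Re}\,p$. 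Integrating $\frac{f'}{f}-\frac1z=\frac{\lambda(p-1)}{z}$ gives $\log\frac{f(z)}{z}=\lambda\sum_{k\ge1}\frac{p_k}{k}z^k$, and I would use the Herglotz representation $p_k=2\int_{\partial\mathbb D}x^k\,d\mu(x)$ for a probability measure $\mu$.

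Next I would reduce to a single Taylor coefficient. For every unimodular $\eta$ the reverse triangle inequality gives $\big||a_{n+1}|-|a_n|\big|\le|a_{n+1}-\eta a_n|$, and $a_{n+1}-\eta a_n$ is exactly the $n$-th coefficient of $(1-\eta z)\frac{f(z)}{z}=\exp\!\big(\sum_{k\ge1}\beta_kz^k\big)$ with $\beta_k=\frac{\lambda p_k-\eta^k}{k}$. The third Lebedev--Milin inequality then yields
$$\big||a_{n+1}|-|a_n|\big|^2\le\exp\!\Big(\sum_{k=1}^n\big(k|\beta_k|^2-\tfrac1k\big)\Big)=\exp(S),\qquad S:=\sum_{k=1}^n\frac{|\lambda p_k-\eta^k|^2-1}{k}.$$
Since $|\eta|=1$, expanding gives $S=|\lambda|^2\sum_{k=1}^n\frac{|p_k|^2}{k}-2\sum_{k=1}^n\frac{\mathrm{Re}(\lambda\,\overline{\eta^{\,k}}\,p_k)}{k}$, and I am still free to pick $\eta$.

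The heart of the argument is to choose $\eta$ well. Inserting the Herglotz formula, the cross term equals $2|\lambda|\int_{\partial\mathbb D}\mathrm{Re}\big[e^{i\gamma}K_n(x\bar\eta)\big]\,d\mu(x)$ with $K_n(w)=\sum_{k=1}^n w^k/k$, while $\sum_k|p_k|^2/k=4\int\!\!\int\mathrm{Re}\,K_n(x\bar y)\,d\mu(x)d\mu(y)$. Averaging the cross term over $\eta$ distributed according to $\mu$, and using that $\int\!\!\int K_n(x\bar y)\,d\mu\,d\mu$ is real, its mean value equals $\tfrac12|\lambda|\cos\gamma\sum_k|p_k|^2/k$; hence some $\eta$ in the support of $\mu$ makes the cross term at least this large. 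With that choice,
$$S\le|\lambda|^2\sum_{k=1}^n\frac{|p_k|^2}{k}-|\lambda|\cos\gamma\sum_{k=1}^n\frac{|p_k|^2}{k}=-\alpha(1-\alpha)\cos^2\gamma\sum_{k=1}^n\frac{|p_k|^2}{k}\le 0,$$
using $|\lambda|=(1-\alpha)\cos\gamma$. For $\alpha=0$ this already gives the bound $1$, i.e. the extension of Theorem~A to $\gamma$-spirallike functions, and when $\sum_{k\le n}|p_k|^2/k$ is bounded below it produces the exponential gain $\exp(-M\alpha\cos\gamma)$.

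The main obstacle is the low-energy regime. The exponent above is $\alpha(1-\alpha)\cos^2\gamma\sum_{k\le n}|p_k|^2/k=(\alpha\cos\gamma)\,|\lambda|\sum_{k\le n}|p_k|^2/k$, so the Lebedev--Milin estimate yields the desired $\exp(-M\alpha\cos\gamma)$ only once $|\lambda|\sum_{k\le n}|p_k|^2/k$ exceeds a constant; when this product is small the bound degenerates to $1$ and is useless, even though the target $\exp(-M\alpha\cos\gamma)$ remains strictly below $1$. To cover that regime I would prove a direct estimate: a small value of $|\lambda|\sum_{k\le n}|p_k|^2/k$ forces $f$ close to the identity, either because $|\lambda|=(1-\alpha)\cos\gamma$ is small or because the $p_k$ are, and then the recursion $(n-1)a_n=\lambda\sum_{k=1}^{n-1}p_ka_{n-k}$ with $|p_k|\le2$, which majorizes $|a_n|$ by the Taylor coefficients of $(1-z)^{-2|\lambda|}$, makes $\big||a_{n+1}|-|a_n|\big|$ tend to $0$ with that product, uniformly in $n$. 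The genuinely delicate point, and where I expect the real work to lie, is to calibrate the two estimates so that they overlap: one must quantify the rate at which $\big||a_{n+1}|-|a_n|\big|\to0$ as $|\lambda|\sum_{k\le n}|p_k|^2/k\to0$ and match it against $\exp(-M\alpha\cos\gamma)$ with a single absolute constant $M$, uniformly in $n$, $\alpha$ and $\gamma$.
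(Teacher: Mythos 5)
Your argument is, in substance, the paper's: the same transfer of the spirallikeness condition to a Carath\'eodory function, the same reduction $\big||a_{n+1}|-|a_n|\big|\le|a_{n+1}-\eta a_n|$ via $(1-\eta z)f(z)/z$, and the same application of the third Lebedev--Milin inequality. The only genuine divergence is how the unimodular parameter is chosen. The paper sets $\psi(z)=e^{i\gamma}\sum_{k=1}^{n}c_kz^k/k$ (in your notation $c_k=(1-\alpha)p_k$), lets $M=\max_{|z|=1}{\rm Re}\,\psi(z)$, picks $\xi_0$ with ${\rm Re}\,\psi(\overline{\xi_0})=M$, and controls the quadratic term by the MacGregor-type Lemma \ref{2lemma1}, namely $\cos\gamma\sum_{k\le n}|c_k|^2/k\le 2M(1-\alpha)$; the two occurrences of $M$ then combine to give the exponent $-2M\alpha\cos\gamma$ with nothing left over. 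Your choice of $\eta$ by averaging the cross term against the Herglotz measure is a clean substitute and yields the exponent $-\alpha(1-\alpha)\cos^2\gamma\sum_{k\le n}|p_k|^2/k$; since Lemma \ref{2lemma1} gives $2M\ge(1-\alpha)\cos\gamma\sum_{k\le n}|p_k|^2/k$, the paper's exponent is at least as negative as yours, but both are $\le 0$, so both recover Corollary \ref{2thm2}. (Minor slip: the $\mu$-average of the cross term is $|\lambda|\cos\gamma\sum_{k\le n}|p_k|^2/k$, not half of that; your displayed inequality uses the correct value, so nothing breaks.)

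The ``low-energy regime'' that you identify as the real difficulty is not actually resolved by the paper, because the paper's $M$ is \emph{not} an absolute constant: it is exactly the $f$- and $n$-dependent quantity $\max_{|z|=1}{\rm Re}\,\psi(z)$, which can be made arbitrarily small (take $\varphi(z)=1+\varepsilon z$), and the example following the proof even records $M=2(1-\alpha)(\log n+1)$ for a specific function, confirming the dependence on $n$. Read with that $M$, the theorem asserts precisely what your main estimate (in slightly weaker form) already delivers, and no second argument is needed; the bound is simply allowed to degenerate to $1$ when ${\rm Re}\,\psi$ has small maximum. If one insists on a literal absolute constant $M>0$, your diagnosis is correct that the Lebedev--Milin route alone does not give it and additional input would be needed when $(1-\alpha)\cos\gamma\sum_{k\le n}|p_k|^2/k$ is small --- but the paper supplies no such argument either. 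So there is no gap in your proposal relative to what the paper actually proves; the unfinished branch you honestly flag corresponds to a stronger reading of the statement than the paper's own proof supports.
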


Note that for $\alpha =0$, the above theorem extend the result of  Leung \cite{Leu78} from starlike to $\gamma$-spirallike functions
and hence Theorem \ref{2thm1} contains the result of Hamilton \cite{Ham80}. For a ready reference, we recall it here.
However, in this paper, we get his result as a consequence of a general result with an alternate proof.

\begin{corollary}\label{2thm2}
Let $f \in \mathcal{S}_\gamma (0)$ for some $|\gamma|<\pi/2$, and be of the form \eqref{deq1}. Then
$$\big | |a_{n+1}|-|a_n| \big |  \leq 1 ~\mbox{ for $n \ge 2$}.
$$
\end{corollary}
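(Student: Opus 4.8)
The plan is to deduce the corollary immediately from Theorem~\ref{2thm1} by specialization. First I would observe that, by definition, the class $\mathcal{S}_\gamma(0)$ appearing in the statement is exactly the family $\mathcal{S}_\gamma(\alpha)$ with $\alpha = 0$, and the admissible range $|\gamma| < \pi/2$ is identical in both. Consequently, every $f \in \mathcal{S}_\gamma(0)$ of the form \eqref{deq1} already meets the hypotheses of Theorem~\ref{2thm1} with $\alpha = 0$.

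Next I would substitute $\alpha = 0$ into the conclusion of Theorem~\ref{2thm1}: the right-hand side $\exp(-M\alpha\cos\gamma)$ collapses to $\exp(0) = 1$, regardless of the value of the absolute constant $M > 0$ or of the spiral parameter $\gamma$. This yields $\big | |a_{n+1}|-|a_n| \big | \leq 1$ for all $n \geq 2$, which is precisely the assertion of Corollary~\ref{2thm2}.

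Since all of the genuine analytic work is carried out in establishing Theorem~\ref{2thm1} --- in particular, producing the absolute constant $M$ and extending Leung's starlike estimate (Theorem~\Ref{ThA}) to the whole spirallike class --- there is no separate obstacle to address here; the corollary is a clean specialization. The one point I would be careful to confirm is that the $M$ in Theorem~\ref{2thm1} is genuinely an absolute constant, independent of both $\alpha$ and $\gamma$, so that evaluating at $\alpha = 0$ is legitimate and delivers the bound $1$ uniformly in $\gamma$. This independence is exactly what the wording ``for some absolute constant $M > 0$'' in the theorem guarantees.
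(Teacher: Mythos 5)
Your proof is correct and is exactly how the paper obtains this corollary: it is stated as an immediate specialization of Theorem~\ref{2thm1} at $\alpha=0$, where $\exp(-M\alpha\cos\gamma)=1$. Your closing worry about the uniformity of $M$ is in fact moot, since the exponent vanishes for $\alpha=0$ no matter what positive value $M$ takes.
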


\begin{remark}
In Theorem \ref{2thm4}, we see that Theorem~\Ref{ThA} and Corollary~\ref{2thm2} continue to hold for functions that are not necessarily
starlike but is close-to-convex. At this place it is worth pointing out that there are functions that are $\gamma$-spirallike
but not close-to-convex. It is also equally true that there exist close-to-convex functions but are not $\gamma$-spirallike.
Theorem \ref{2thm4} is supplementary for this reasoning.
\end{remark}

The paper is organized as follows. Section \ref{2sec2} deals with definitions of classes of functions and statements of main results. In Section \ref{2sec3}, we state and prove a lemma which will be used in the proof of our main results in Section \ref{2sec4}.


\section{Definitions and further results}\label{2sec2}

We consider another family of functions that includes
the class of convex functions as a proper subfamily. For
$-\pi/2<\gamma<\pi/2$, we say that $f\in {\mathcal C}_\gamma (\alpha) $
provided $f\in {\mathcal A}$ is locally univalent in $\mathbb{D}$
and  $zf'(z)$ belongs to $\mathcal{S}_\gamma (\alpha)$, i.e.
\be\label{pv-hire2-eq1}
{\rm Re } \left \{ e^{-i\gamma }\left ( 1+\frac{zf''(z)}{f'(z)}\right )\right \}>\alpha \cos \gamma, \quad
z\in\mathbb{D}.
\ee
We may set ${\mathcal C}_\gamma (0) =:{\mathcal C}_\gamma$  and observe that the class
${\mathcal C}_0(\alpha)=:{\mathcal C}(\alpha)$ consists of the normalized
convex functions of order $\alpha$. For general values of $\gamma $
$(|\gamma|<\pi/2)$, a function in ${\mathcal C}_\gamma (0)$ need not be
univalent in $\mathbb{D}$. For example, the function
$f(z)=i(1-z)^i-i$ is known to belong to
${\mathcal C}_{\pi/4}\backslash {\mathcal S}$. Robertson
\cite{Robertson-69} showed  that $f\in\mathcal{C}_{\gamma}$ is
univalent if $0<\cos \gamma\leq 0.2315\cdots$. Finally, Pfaltzgraff \cite{Pfaltzgraff} has shown that
$f\in\mathcal{C}_{\gamma}$ is univalent whenever $0<\cos \gamma\leq
1/2$. This settles the improvement of range of $\gamma$ for which $f\in\mathcal{C}_{\gamma}$
is univalent. On the other hand,  in \cite{SinghChic-77} it was also shown that
functions in ${\mathcal C}_\gamma$ which satisfy $f''(0)=0$ are
univalent for all real values of $\gamma$ with $|\gamma|<\pi /2$.
For a general reference about these special classes we refer to
\cite{Goo83}.

\begin{Thm} \label{ThB}
{\rm \cite{LS17}}
For every $f \in {\mathcal C}:={\mathcal C}(0)$ of the form \eqref{deq1}, the following inequality holds
$$
|a_{n+1}|-|a_n| \leq \frac{1}{n+1}
$$
for $n\geq 2$, and the extremal function is given by
$$
L_{\phi}(z)=\cfrac{1}{e^{i\phi}-e^{-i\phi}} ~\log\left (\frac{1-e^{-i\phi}z}{1-e^{i\phi}z} \right )
$$
for $\phi=\pi/n$, where a principal branch of logarithm is chosen.
\end{Thm}

A straightforward application of Theorem \ref{2thm1} yields the following generalization of Theorem \Ref{ThB}
for convex functions of order $\alpha$ and also for locally univalent functions that are not necessarily univalent in the unit
disk $\mathbb D$.

\begin{corollary}\label{2thm1:coro1}
Suppose that $f\in {\mathcal C}_\gamma (\alpha)$ for some $\alpha \in [0,1)$ and $-\pi/2<\gamma<\pi/2$. Then we have
$$|a_{n+1}|-|a_n|\leq\cfrac{\exp(-M\alpha \cos \gamma)}{n+1}
$$
for some absolute constant $M>0$. In particular, we have
\begin{enumerate}
\item [(1)] For $f\in {\mathcal C}_\gamma (0)$,
$$|a_{n+1}|-|a_n|\leq \frac{1}{n+1}.
$$

\item [(2)] For $f\in {\mathcal C}(\alpha)$ we have
$$
|a_{n+1}|-|a_n|\leq\cfrac{\exp(-M\alpha)}{n+1}
$$
for some absolute constant $M>0$.
\end{enumerate}
\end{corollary}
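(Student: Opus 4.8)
The plan is to derive everything from Theorem~\ref{2thm1} by passing to the associated spirallike function. Suppose $f\in{\mathcal C}_\gamma(\alpha)$ is given by \eqref{deq1}, and set $g(z)=zf'(z)$. By the very definition of the class ${\mathcal C}_\gamma(\alpha)$ (equivalently, by \eqref{pv-hire2-eq1}, since $zg'(z)/g(z)=1+zf''(z)/f'(z)$), the function $g$ belongs to ${\mathcal S}_\gamma(\alpha)$. Writing $g(z)=z+b_2z^2+b_3z^3+\cdots$, term-by-term differentiation gives $g(z)=z+\sum_{k\ge2}ka_kz^k$, so that $b_k=ka_k$ for every $k\ge1$; in particular $g$ is again normalized as in \eqref{deq1} and lies in ${\mathcal A}$.

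Next I would apply Theorem~\ref{2thm1} to $g$. This yields, for every $n\ge2$,
\[
\big||b_{n+1}|-|b_n|\big|\le\exp(-M\alpha\cos\gamma),
\]
and in particular the one-sided bound $(n+1)|a_{n+1}|-n|a_n|\le\exp(-M\alpha\cos\gamma)$, upon substituting $b_{n+1}=(n+1)a_{n+1}$ and $b_n=na_n$. The key observation is that the quantity I actually want to bound can be rewritten as
\[
(n+1)\big(|a_{n+1}|-|a_n|\big)=\big((n+1)|a_{n+1}|-n|a_n|\big)-|a_n|\le\exp(-M\alpha\cos\gamma)-|a_n|\le\exp(-M\alpha\cos\gamma),
\]
where the final step uses only that $|a_n|\ge0$. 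Dividing by $n+1$ gives the asserted inequality $|a_{n+1}|-|a_n|\le\exp(-M\alpha\cos\gamma)/(n+1)$.

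The two itemized special cases then follow by specialization: taking $\alpha=0$ makes the exponential factor equal to $1$ and gives (1), which extends Theorem~\Ref{ThB} from ${\mathcal C}$ to the full $\gamma$-convex family ${\mathcal C}_\gamma(0)$; taking $\gamma=0$ makes $\cos\gamma=1$ and gives (2). I do not expect a genuine obstacle here, since the estimate is a formal consequence of Theorem~\ref{2thm1}; the only point requiring a little care is to keep the inequality one-sided and to discard the nonnegative term $|a_n|$ at the right moment, which is precisely what converts the spirallike bound for the coefficients $b_k=ka_k$ into the sharper $1/(n+1)$-type decay appropriate to the convex setting.
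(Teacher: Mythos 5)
Your proposal is correct and follows essentially the same route as the paper: pass to $g(z)=zf'(z)\in{\mathcal S}_\gamma(\alpha)$ via the Alexander correspondence $b_k=ka_k$, apply Theorem~\ref{2thm1} to get $(n+1)|a_{n+1}|-n|a_n|\le\exp(-M\alpha\cos\gamma)$, and then discard the nonnegative term $|a_n|$ (the paper phrases this as $|a_{n+1}|-|a_n|\le |a_{n+1}|-\tfrac{n}{n+1}|a_n|$, which is the same inequality after dividing yours by $n+1$). No gaps.
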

\begin{proof}
By the classical Alexander theorem, $f(z)=z+\sum_{n=2}^{\infty} a_n z^n$ belongs to ${\mathcal C}_\gamma (\alpha)$ if and only if $zf'(z)=z+\sum_{n=2}^{\infty} b_n z^n$ is ${\mathcal S}_\gamma (\alpha)$ and clearly, $b_n=n a_n.$ Thus, by Theorem \ref{2thm1}, we have
$$
(n+1)|a_{n+1}|-n|a_n|=|b_{n+1}|-|b_n|\leq \exp(-M\alpha \cos \gamma).
$$
This gives,
$$
 |a_{n+1}|-|a_n| \leq |a_{n+1}|-\cfrac{n}{n+1}|a_n| \leq \cfrac{\exp(-M\alpha \cos \gamma)}{n+1}\,.
 $$
The proof of the corollary is complete.
\end{proof}

We would like to remark that Hamilton generalized Leung's result to the case of starlike functions of non-positive order and proved the following:


\begin{Thm} \label{ThC}
{\rm \cite{Ham80}}
For a function $f(z) \in {\mathcal S}^*(\alpha )$ for some $\alpha \leq 0,$
$$
\big | |a_{n+1}|-|a_n| \big |  \leq \cfrac{\Gamma(1-2\alpha +n)}{\Gamma(1-2\alpha )\Gamma(n+1)}\,.
$$
Equality holds for the function $f(z)=z(1-z)^{2(\alpha -1)}$.
\end{Thm}

Let $f\in\mathcal{A}$ be locally univalent. Then, according to Kaplan's theorem, it follows that
$f$ is {\em close-to-convex} if and only if for each $r~(0<r<1)$
and for each pair of real numbers $\theta_1$ and $\theta_2$ with $\theta_1<\theta_2$,
$$
 \int_{\theta_1}^{\theta_2} {\rm Re}\left(1+\frac{zf''(z)}{f'(z)}\right)\,d\theta>-\pi,\quad z=re^{i\theta}.
$$
If a locally univalent analytic function $f$ defined in $\mathbb{D}$ satisfies
$$
 {\rm Re}\left(1+\frac{zf''(z)}{f'(z)}\right)>-\frac{1}{2}  ~\mbox{ for $z \in \mathbb{D}$},
$$
then by the Kaplan characterization it follows easily that $f$ is close-to-convex in $\mathbb{D}$,
and hence $f$ is univalent in $\mathbb{D}$.
This generates the following subclass of the class of close-to-convex (univalent) functions:
$$
\mathcal{C} (-1/2):=\left\{f\in \mathcal{A}:\,{\rm Re}\left (1+ \frac{zf''(z)}{f'(z)}\right )>-\frac{1}{2}  ~\mbox{ for $z \in \mathbb{D}$}\right\}.
$$
This class of functions is also studied recently by the authors in \cite{AS17}, and others in different contexts; for instance see \cite{LP17,MYLP14,PSY14} and references therein. Functions in $\mathcal{C} (-1/2)$ are not necessarily starlike but is convex in some
direction as the function
\begin{equation}\label{2condirection}
f(z)=\cfrac{z-(z^2/2)}{(1-z)^2}
\end{equation}
shows. Note that  
$$
{\rm Re}\left (1+ \frac{zf''(z)}{f'(z)}\right ) ={\rm Re} \left(\cfrac{1+2z}{1-z}\right)>-\frac{1}{2} ~\mbox{ for $z \in \mathbb{D}$}
$$
and thus  $f\in \mathcal{C} (-1/2)$, but not starlike in $\mathbb{D}$.

%

\begin{theorem}\label{2thm4}
Let $f \in \mathcal{C} (-1/2)$. Then
$$
|a_{n+1}|-|a_n|\leq1.
$$
\end{theorem}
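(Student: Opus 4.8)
The plan is to reduce Theorem~\ref{2thm4} to Hamilton's result, Theorem~\Ref{ThC}, by the same Alexander-type device used in the proof of Corollary~\ref{2thm1:coro1}. Given $f\in\mathcal{C}(-1/2)$ of the form \eqref{deq1}, I would introduce $g(z):=zf'(z)$ and write $g(z)=z+\sum_{n=2}^{\infty}b_n z^n$. Since $f$ is locally univalent, $f'(0)=1$ and $f'\neq0$, so $g\in\mathcal{A}$ with a simple zero only at the origin, and its coefficients satisfy $b_n=na_n$. The key identity is
$$
\frac{zg'(z)}{g(z)}=\frac{z\bigl(f'(z)+zf''(z)\bigr)}{zf'(z)}=1+\frac{zf''(z)}{f'(z)},
$$
so the defining inequality $\mathrm{Re}\bigl(1+zf''/f'\bigr)>-1/2$ of $\mathcal{C}(-1/2)$ is exactly $\mathrm{Re}\,(zg'/g)>-1/2$; that is, $g\in\mathcal{S}^*(-1/2)$. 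No univalence of $g$ is needed, which matters because Theorem~\Ref{ThC} is stated precisely for the class $\mathcal{S}^*(\alpha)$ with $\alpha\le0$, where univalency is lost.

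Next I would apply Theorem~\Ref{ThC} to $g\in\mathcal{S}^*(\alpha)$ at $\alpha=-1/2$. Here $1-2\alpha=2$, so the Gamma-quotient collapses to an integer:
$$
\frac{\Gamma(1-2\alpha+n)}{\Gamma(1-2\alpha)\,\Gamma(n+1)}=\frac{\Gamma(n+2)}{\Gamma(2)\,\Gamma(n+1)}=\frac{(n+1)!}{n!}=n+1 .
$$
Hence $\bigl||b_{n+1}|-|b_n|\bigr|\le n+1$ for all $n$. Keeping only the upper estimate and substituting $b_n=na_n$ gives
$$
(n+1)|a_{n+1}|-n|a_n|\le n+1 .
$$

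Finally I would recover the claimed bound by a one-line rearrangement. Writing $(n+1)|a_{n+1}|-n|a_n|=(n+1)\bigl(|a_{n+1}|-|a_n|\bigr)+|a_n|$, the previous inequality yields
$$
(n+1)\bigl(|a_{n+1}|-|a_n|\bigr)\le (n+1)-|a_n|,
$$
so that
$$
|a_{n+1}|-|a_n|\le 1-\frac{|a_n|}{n+1}\le 1,
$$
which is the assertion. I do not expect a genuine obstacle: all the analytic content is carried by Theorem~\Ref{ThC}, and the only point that deserves a moment's care is the verification that $\mathcal{C}(-1/2)$ is exactly the Alexander preimage of $\mathcal{S}^*(-1/2)$, so that Hamilton's theorem, stated for non-positive order, applies verbatim at $\alpha=-1/2$. (As a sanity check, the function \eqref{2condirection} has $g(z)=z(1-z)^{-3}$, the extremal of Theorem~\Ref{ThC} at $\alpha=-1/2$, and indeed realizes $|a_{n+1}|-|a_n|=1/2=1-|a_n|/(n+1)$.)
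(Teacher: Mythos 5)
Your proposal is correct and follows essentially the same route as the paper: pass to $g(z)=zf'(z)\in\mathcal{S}^*(-1/2)$, apply Hamilton's Theorem~\Ref{ThC} at $\alpha=-1/2$ to get $\bigl|(n+1)|a_{n+1}|-n|a_n|\bigr|\le n+1$, and rearrange. The only cosmetic difference is in the final step (the paper bounds $|a_{n+1}|-|a_n|$ by $\bigl||a_{n+1}|-\tfrac{n}{n+1}|a_n|\bigr|$ rather than by $1-|a_n|/(n+1)$), which is equivalent.
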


The following result is an immediate consequence of Theorem \ref{2thm4} which solves the Robertson conjecture problem
for the class $\mathcal{C}(-1/2)$. It is worth pointing out that in 1966 Robertson \cite{Rob66} conjectured that the Bieberbach Conjecture could be strengthened to
$$\big | n|a_n|-m|a_m| \big | \le \big|n^2-m^2\big| \quad \mbox{ for all $m,n\ge 2$},
$$
however, two years latter Jenkins \cite{Jen68} showed that this inequality fails in the class $\mathcal{S}$.

\begin{theorem}\label{2thm5}
Let $f \in \mathcal{C}(-1/2)$. Then for $n>m$ we have
$$
\big | n|a_n|-m|a_m| \big |  \leq \frac{(n^2-m^2)+(n-m)}{2}=\frac{(n-m)(n+m+1)}{2}.
$$
Equality holds for $f(z)=(z-(z^2/2))/(1-z)^2$.
\end{theorem}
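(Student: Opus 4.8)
The plan is to transfer the problem, by Alexander's theorem, to a statement about successive coefficients of a starlike function of order $-1/2$, where a clean linear bound is available, and then simply to telescope. Since $f\in\mathcal{C}(-1/2)$, the associated function $g(z):=zf'(z)$ lies in $\mathcal{S}^*(-1/2)$; writing $g(z)=z+\sum_{k\ge 2}b_kz^k$ we have $b_k=ka_k$, and hence $|b_k|=k|a_k|$. The decisive observation is that the quantity to be bounded is literally a difference of moduli of coefficients of $g$, and telescopes:
\[
n|a_n|-m|a_m|=|b_n|-|b_m|=\sum_{k=m}^{n-1}\bigl(|b_{k+1}|-|b_k|\bigr).
\]

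Next I would record the pointwise estimate for $g$. Applying Theorem~\Ref{ThC} (Hamilton) to $g\in\mathcal{S}^*(\alpha)$ with $\alpha=-1/2$ gives, for every $k\ge1$,
\[
\bigl|\,|b_{k+1}|-|b_k|\,\bigr|\le\frac{\Gamma(1-2\alpha+k)}{\Gamma(1-2\alpha)\,\Gamma(k+1)}=\frac{\Gamma(k+2)}{\Gamma(2)\,\Gamma(k+1)}=k+1,
\]
where the middle equality uses $\alpha=-1/2$ and the last uses $\Gamma(k+2)=(k+1)\Gamma(k+1)$ together with $\Gamma(2)=1$. This two-sided estimate is exactly the one underlying Theorem~\ref{2thm4}: dividing by $k+1$ yields $|a_{k+1}|-|a_k|\le |a_{k+1}|-\tfrac{k}{k+1}|a_k|\le 1$.

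Then, taking absolute values in the telescoping identity and combining the triangle inequality with the last display, I would obtain, for $n>m$,
\[
\bigl|\,n|a_n|-m|a_m|\,\bigr|\le\sum_{k=m}^{n-1}\bigl|\,|b_{k+1}|-|b_k|\,\bigr|\le\sum_{k=m}^{n-1}(k+1)=\frac{n(n+1)}{2}-\frac{m(m+1)}{2}=\frac{(n-m)(n+m+1)}{2},
\]
which is the asserted inequality. For sharpness I would check the extremal function directly: for $f(z)=(z-z^2/2)/(1-z)^2$ one computes $zf'(z)=z(1-z)^{-3}$, the Koebe-type extremal for $\mathcal{S}^*(-1/2)$ appearing in Theorem~\Ref{ThC}, whose coefficients give $a_k=(k+1)/2$ for all $k\ge 1$; substituting yields $n|a_n|-m|a_m|=\tfrac12\bigl(n(n+1)-m(m+1)\bigr)=\tfrac{(n-m)(n+m+1)}{2}$, so equality holds.

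Since every step after the Alexander reduction is mechanical, I do not anticipate a genuine obstacle; the real content is recognizing that $n|a_n|-m|a_m|$ is a successive-coefficient difference for $g$. The only points demanding care are to keep Hamilton's estimate \emph{two-sided}, so that the triangle inequality simultaneously controls both $n|a_n|-m|a_m|$ and $m|a_m|-n|a_n|$ through the same sum, and to confirm that the gamma quotient truly collapses to the linear value $k+1$ at $\alpha=-1/2$ rather than to a larger constant.
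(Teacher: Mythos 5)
Your proposal is correct and follows essentially the same route as the paper: both pass to $g(z)=zf'(z)\in\mathcal{S}^*(-1/2)$, invoke Hamilton's theorem at $\alpha=-1/2$ to get $\bigl|(k+1)|a_{k+1}|-k|a_k|\bigr|\le k+1$ (the paper's inequality \eqref{2thm-eq4}), and then telescope with the triangle inequality. The verification of equality for $f(z)=(z-z^2/2)/(1-z)^2$ also matches the paper's.
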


%
\section{Preliminary result}\label{2sec3}

The following lemma plays a crucial role in the proof of our main results.


\begin{lemma}\label{2lemma1}
Let $\varphi(z)=1+\sum_{n=1}^{\infty} c_nz^n$ be analytic in $\mathbb{D}$ such that ${\rm Re}\,\varphi(z)>\alpha$
in $\mathbb{D}$ for some $\alpha<1$.
Suppose that $\psi(z)=e^{i\gamma}\sum_{n=1}^\infty \lambda_n c_n z^n$ is analytic in $\mathbb{D}$,
where $\lambda_n \geq 0$ and ${\rm Re}\,\psi(z) \le M$ for some $M>0$.
Then we have the inequality
$$
\cos\gamma\sum_{n=1}^\infty \lambda_n |c_n|^2 \leq 2M(1-\alpha).
$$
\end{lemma}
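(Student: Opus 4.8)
The plan is to realize the weighted quadratic sum $\sum_{n}\lambda_n|c_n|^2$ as a boundary integral over the circles $|z|=r$ and then to exploit the two one-sided bounds ${\rm Re}\,\varphi>\alpha$ and ${\rm Re}\,\psi\le M$ together with the mean value property for analytic functions. First I would fix $r\in(0,1)$, set $z=re^{i\theta}$, and record the Fourier data of the two real-valued functions on the circle: ${\rm Re}\,\varphi$ has constant Fourier coefficient $1$ and $\pm n$-th coefficients $\tfrac12 r^n c_n$ and $\tfrac12 r^n\overline{c_n}$, whereas ${\rm Re}\,\psi$ has vanishing constant coefficient and $\pm n$-th coefficients $\tfrac12 r^n e^{i\gamma}\lambda_n c_n$ and $\tfrac12 r^n e^{-i\gamma}\lambda_n\overline{c_n}$, since $\psi(0)=0$.

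Applying Parseval's relation to the product then isolates exactly the desired sum, the factor $\cos\gamma$ emerging from $e^{i\gamma}+e^{-i\gamma}$: I expect to obtain the identity
\[
\frac{1}{2\pi}\int_0^{2\pi}{\rm Re}\,\varphi(re^{i\theta})\,{\rm Re}\,\psi(re^{i\theta})\,d\theta=\frac{\cos\gamma}{2}\sum_{n=1}^\infty \lambda_n|c_n|^2 r^{2n}.
\]
The choice of integrand ${\rm Re}\,\varphi\cdot{\rm Re}\,\psi$ (rather than, say, $\overline{\varphi}\,\psi$) is the crucial one, because after integrating it leaves precisely $\cos\gamma\sum\lambda_n|c_n|^2 r^{2n}$ with no leftover imaginary cross terms to control.

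Next I would bound the left-hand integral. Writing $u={\rm Re}\,\varphi$ and $p={\rm Re}\,\psi$, I would split $up=Mu-u(M-p)$. The mean value property gives $\tfrac{1}{2\pi}\int_0^{2\pi}u\,d\theta={\rm Re}\,\varphi(0)=1$ and $\tfrac{1}{2\pi}\int_0^{2\pi}p\,d\theta={\rm Re}\,\psi(0)=0$, so $\int_0^{2\pi}(M-p)\,d\theta=2\pi M$. Since $M-p\ge0$ pointwise and $u>\alpha$, one has $u(M-p)\ge\alpha(M-p)$, and integrating yields $\int_0^{2\pi}u(M-p)\,d\theta\ge 2\pi M\alpha$. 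Combining,
\[
\frac{1}{2\pi}\int_0^{2\pi}up\,d\theta\le M-M\alpha=M(1-\alpha),
\]
whence $\cos\gamma\sum_{n=1}^\infty\lambda_n|c_n|^2 r^{2n}\le 2M(1-\alpha)$. Letting $r\to1^-$ and invoking monotone convergence (all terms $\lambda_n|c_n|^2\ge0$) completes the proof.

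The main obstacle is the sign bookkeeping in this positivity step: the inequality $u(M-p)\ge\alpha(M-p)$ must hold pointwise for \emph{every} sign of $\alpha$, which is precisely guaranteed by the hypothesis $M-p\ge0$ combined with $u>\alpha$, and the clean constant $2\pi M$ in $\int(M-p)\,d\theta$ hinges on $\psi$ having no constant term so that the mean of ${\rm Re}\,\psi$ vanishes. Everything else is a routine, absolutely convergent Parseval manipulation valid for each fixed $r<1$.
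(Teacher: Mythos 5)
Your proof is correct and follows essentially the same route as the paper: both rest on the identity $\frac{1}{2\pi}\int_0^{2\pi}({\rm Re}\,\varphi)({\rm Re}\,\psi)\,d\theta=\frac{\cos\gamma}{2}\sum_{n\ge1}\lambda_n|c_n|^2r^{2n}$ (the paper derives it by contour integration on $|z|=r$, you by Parseval, which is the same computation) and then bound the integral using ${\rm Re}\,\psi\le M$ together with the mean value property $\frac{1}{2\pi}\int_0^{2\pi}{\rm Re}\,\varphi\,d\theta=1$. The only difference is organizational: the paper first treats $\alpha=0$, where ${\rm Re}\,\varphi>0$ gives $({\rm Re}\,\varphi)({\rm Re}\,\psi)\le M\,{\rm Re}\,\varphi$ directly, and then reduces the general case via the normalization $\Phi=(\varphi-\alpha)/(1-\alpha)$, whereas your decomposition $up=Mu-u(M-p)$ with the pointwise bound $u(M-p)\ge\alpha(M-p)$ handles every $\alpha<1$ (of either sign) in a single step.
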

\begin{proof}
Let us first prove the result for $\alpha=0$. Consider the identity
$$4({\rm Re}\,\varphi)({\rm Re}\,\psi)=(\varphi+\overline{\varphi})(\psi+\overline{\psi})
=(\varphi\psi+\varphi\overline{\psi})+\overline{(\varphi\psi+\varphi\overline{\psi})}
$$
so that
\begin{equation}\label{lem1-eq1}
4\int_{|z|=r}({\rm Re}\,\varphi)({\rm Re}\,\psi)\,d\theta =
2 {\rm Re}\left (\int_{|z|=r}\varphi(z)\overline{\psi(z)}\,d\theta \right ),
\end{equation}
since (with $z=re^{i\theta}$)
\begin{equation}\label{lem1-eq2}
\int_0^{2\pi}\varphi(z)\psi(z)\,d\theta
=\int_{|z|=r}\varphi(z)\psi(z)\,\frac{dz}{iz}=0,
\end{equation}
by the Cauchy integral formula and the fact that $\psi(0)=0$.
Using the power series representation of $\varphi(z)$ and $\psi(z)$,  it follows that (since $\overline{z}=r^2/z$ on $|z|=r$)
\begin{eqnarray}\label{lem1-eq3}
\int_{|z|=r}\varphi(z)\overline{\psi(z)}\,d\theta & = &e^{-i\gamma}\int_{|z|=r}
\left [1+\sum_{n=1}^\infty c_nz^n\right]\left[\sum_{n=1}^\infty\overline{c_n}\lambda_n\frac{r^{2n}}{z^n}\right]\frac{dz}{iz} \nonumber\\
&=&2\pi e^{-i\gamma}\sum_{n=1}^\infty \lambda_n |c_n|^2  r^{2n}.
\end{eqnarray}
By \eqref{lem1-eq2}, \eqref{lem1-eq3} and the assumption that  ${\rm Re}\,\psi(z) \le M$ for some $M>0$, the identity \eqref{lem1-eq1} reduces to
$$
4\pi \cos \gamma\sum_{n=1}^\infty \lambda_n |c_n|^2 r^{2n}=4\int_0^{2\pi} ({\rm Re}\,\varphi(z))({\rm Re}\,\psi(z))\,d\theta\le 4M \int_0^{2\pi} {\rm Re}\,\varphi(z)\,d\theta=8M\pi,
$$
where we have used the fact that
\begin{align*}
\frac{1}{2\pi}\int_0^{2\pi}{\rm Re}\,\varphi(z)\,d\theta
& =  \frac{1}{2\pi}\int_0^{2\pi} \frac{\varphi(z)+\overline{\varphi(z)}}{2}\,d\theta\\
& = \frac{1}{4\pi}\left[\int_{|z|=r}\varphi(z)\,\frac{dz}{iz}\,+\,\overline{\int_{|z|=r}\varphi(z)\,\frac{dz}{iz}}\,\right]\\
& = \frac{1}{4\pi}(2\pi+2\pi)=1.
\end{align*}
The desired result for the case $\alpha=0$ follows by letting $r\to 1^{-}$ in the last inequality.

Finally, for the general case, we first observe that ${\rm Re}\,\Phi(z)>0$, where
$$
\Phi(z)=\frac{\varphi(z)-\alpha}{1-\alpha}=1+\sum_{n=1}^\infty d_nz^n, \quad d_n=\frac{c_n}{1-\alpha}.
$$
Also, the given condition on $\psi$ gives ${\rm Re}\,\Psi(z)\le \frac{M}{1-\alpha},$ where
$$
\Psi(z)=e^{i\gamma}\sum_{n=1}^\infty\lambda_n d_n z^n=\frac{1}{1-\alpha}\left ( e^{i\gamma}\sum_{n=1}^\infty \lambda_nc_nz^n\right ) =
\frac{1}{1-\alpha}\psi (z).
$$
Applying the previous arguments for the pair $(\Phi(z),\Psi(z))$, one obtains that
$$
\cos \gamma \sum_{n=1}^\infty \lambda_n |d_n|^2=\frac{\cos \gamma }{(1-\alpha)^2}\sum_{n=1}^\infty \lambda_n|c_n|^2\le \frac{2M}{1-\alpha}
$$
so that $\cos \gamma \sum_{n=1}^\infty \lambda_n|c_n|^2\le 2M(1-\alpha)$, as desired.
\end{proof}

\begin{remark}
We remark that Lemma \ref{2lemma1} for $\gamma =0$ is obtained by
MacGregor\cite{MacGre69} (see also \cite{Leu78} and \cite[p.178, Lemma]{Dur83}).
\end{remark}


%
\section{Proof of the main results}\label{2sec4}
We begin with the proof of Theorem \ref{2thm1}

\subsection{Proof of Theorem \ref{2thm1}}
Let  $f\in\mathcal{S}_\gamma (\alpha)$. Then by the definition, we may consider $\varphi$ by
$$ \frac{1}{\cos \gamma }\left [e^{-i\gamma} \cfrac{zf'(z)}{f(z)} +i\sin \gamma \right ]
= \varphi (z)
$$
so that
$$e^{-i\gamma} \left (\cfrac{zf'(z)}{f(z)} -1 \right )= \cos\gamma\, (\varphi (z) -1),
$$
where ${\rm Re}\,\{\varphi(z)\}>\alpha $ and $\varphi(z)=1+\sum_{n=1}^\infty c_nz^n$ is analytic in $\mathbb D$.
We may rewrite the last equation as
\begin{equation}\label{2thm1:eq0}
\cfrac{f'(z)}{f(z)}-\cfrac{1}{z}=
e^{i\gamma} \cos\gamma\, \sum_{n=1}^\infty c_nz^{n-1}
 \end{equation}
which by simple integration gives
\begin{equation}\label{2thm1:eq1}
\log\left (\cfrac{f(z)}{z}\right )= e^{i\gamma}  \cos\gamma\,  \sum_{n=1}^\infty \cfrac{c_nz^n}{n}\,,
\end{equation}
where we use the principal value of the logarithm such that $\log 1$=0. By the Taylor series expansion
of $\log (1-\xi z)$ and \eqref{2thm1:eq1}, we get
\begin{align}\label{2thm1:eq2}
\log {(1-\xi z)\cfrac{f(z)}{z}}
& =\sum_{n=1}^\infty \cfrac{C_n-\xi^n}{n}z^n
=\sum_{n=1}^\infty \alpha_n z^n,
 \end{align}
where  $C_n=e^{i\gamma} \cos\gamma\,c_n$ and
$$
\alpha_n=\frac{C_n-\xi^n}{n} =\frac{e^{i\gamma} \cos\gamma\,c_n-\xi^n}{n}.
$$
Also, for $|\xi|=1$, we have
\begin{align}\label{2thm1:eq3}
(1-\xi z)\cfrac{f(z)}{z}
&=\sum_{n=0}^\infty \beta_n z^n, \quad \beta_n=a_{n+1}-\xi a_n.
\end{align}
From \eqref{2thm1:eq2} and \eqref{2thm1:eq3}, it follows that
$$
\exp \Big(\sum_{n=1}^\infty \alpha_n z^n \Big)=\sum_{n=0}^\infty \beta_n z^n, ~\beta_0=1.
$$
Then, by the third Lebedev-Milin inequality (see \cite[p. 143]{Dur83}), we have
$$|\beta _n|^2\leq \exp\Bigg\{\sum_{k=1}^n\Bigg( k|\alpha _k|^2-\cfrac{1}{k}\Bigg) \Bigg\},
$$
or equivalently
\begin{equation}\label{2thm1:eq4}
|a_{n+1}-\xi a_n|^2\leq \exp\Bigg\{\sum_{k=1}^n\Bigg(\cfrac{|C_k -\xi^k |^2}{k}-\cfrac{1}{k}\Bigg) \Bigg\}\,.
\end{equation}
Now we consider
$$
\psi(z)=e^{i\gamma}\sum_{k=1}^n\cfrac{ c_k {z}^k }{k},
$$
and let $M$ be the maximum of ${\rm Re}\{\psi(z)\}$ on $|z|=1.$
Applying Lemma \ref{2lemma1} with $\lambda_k=1/k$ for $1 \leq k \leq n$ and $\lambda_k=0$ for $k>n$, we obtain
\begin{align*}
\sum_{k=1}^n\Bigg(\cfrac{|C_k -\xi^k |^2}{k}-\cfrac{1}{k}\Bigg)&=\cos^2\gamma\sum_{k=1}^n \cfrac{|c_k |^2}{k}-2\cos\gamma\,
\sum_{k=1}^n\cfrac{{\rm Re}( e^{i\gamma} c_k \overline{\xi}^k) }{k}\\
& \leq 2 M(1-\alpha)\cos\gamma -2\cos\gamma \, {\rm Re}\{\psi(\overline{\xi})\}.
\end{align*}
Choosing $\xi$ (say $\xi_0$) so that ${\rm Re}\{\psi(\overline{\xi_0})\}=M,$ we see that
 \begin{align*}
 \sum_{k=1}^n\Bigg(\cfrac{|C_k -\xi_0^k |^2}{k}-\cfrac{1}{k}\Bigg)&\leq 2 M(1-\alpha)\cos\gamma -2M\cos\gamma =-2M\alpha \cos\gamma.
\end{align*}
Hence from \eqref{2thm1:eq4}, $|a_{n+1}-\xi_0 a_n|\leq \exp(-M\alpha \cos\gamma)$ for some $\xi_0$ with $|\xi_0|=1.$ Since
$$
\big | |a_{n+1}|-|a_n| \big |  \leq |a_{n+1}-\xi_0 a_n|\leq \exp(-M\alpha \cos\gamma),
$$
the proof of our theorem is complete. \hfill{$\Box$}

\vspace{6pt}

Here we provide one example that associates to Theorem \ref{2thm1}.

\begin{example}
Consider the function  $f(z):=f_{\gamma, \alpha}(z)=z/(1-z)^{\beta}$, where $\beta =2(1-\alpha)\cos \gamma$.
It is easy to check that $f\in\mathcal{S}_\gamma (\alpha)$,
$$
f(z)=z+\sum_{n=2}^\infty \frac{\Gamma(n+\beta)}{\Gamma(n+1)\Gamma(\beta )}z^n
~\mbox{ and }~  e^{-i\gamma}\cfrac{zf'(z)}{f(z)}=e^{-i\gamma}+  2(1-\alpha)\cos\gamma \cfrac{z}{1-z}.
$$
Again consider the function
$$
\varphi(z)=e^{-i\gamma}\cfrac{zf'(z)}{f(z)}= e^{-i\gamma}+2(1-\alpha)\cos\gamma \,\sum_{n=1}^\infty z^n.
$$
It is clear that ${\rm Re}\,(\varphi(z))>\alpha \cos\gamma$.  Now, if we adopt the proof of Lemma \ref{2lemma1} and Theorem \ref{2thm1} by assuming $\psi(z)= 2(1-\alpha) \sum_{n=1}^\infty  z^n$ and $\gamma =0$, then for $f\in\mathcal{S}^* (\alpha)$ we obtain
$$\big | |a_{n+1}|-|a_n| \big | \leq \exp(-\alpha M),\quad  M=2(1-\alpha)(\log n+1).
$$
\end{example}



\subsection{Proof of Theorem \ref{2thm4}}
Let $f\in\mathcal{C} (-1/2)$. Then  the function $g(z)=\sum_{n=1}^{\infty} b_nz^n=zf'(z)$, where $b_n=na_n$,  belongs to $\mathcal{S}^*(-1/2)$.
From Theorem \Ref{ThC}, we obtain that
\begin{equation}\label{2thm-eq4}
\big | |b_{n+1}|-|b_n| \big | =\big | (n+1)|a_{n+1}|-n|a_n| \big |  =(n+1)\left ||a_{n+1}|-\cfrac{n}{n+1}|a_n|\right | \leq n+1
\end{equation}
which implies that
 $$|a_{n+1}|-|a_n|\leq\left | |a_{n+1}|-\cfrac{n}{n+1}|a_n| \right | \leq 1,
$$
and the proof is complete.
\hfill{$\Box$}


\begin{example} Consider the function $f$ defined by \eqref{2condirection}, namely,
$$
f(z)=\cfrac{z-z^2/2}{(1-z)^2} =\sum_{n=1}^{\infty} \frac{n+1}{2}z^n.
$$
It is easy to check that $f$ satisfies the hypothesis of Theorem \ref{2thm4}.  For this function, we have
$$
|a_{n+1}|-|a_n|=\cfrac{n+2}{2}-\cfrac{n+1}{2}=\frac{1}{2}<1.
$$
\end{example}


\begin{example} Consider the function $f$ defined by
$$
f(z)=\cfrac{z}{\sqrt{1-z^2}} =\sum_{n=1}^{\infty} \cfrac{\Gamma(n+1/2)}{\pi \Gamma(n+1)}\,z^{2n+1}.
$$
A simple computation shows that $f  \in \mathcal{C} (-1/2)$ and for this function, we see that
$$
|a_{n+1}|-|a_n|=\cfrac{\Gamma(n+1/2)}{\pi \Gamma(n+1)} <1,
$$
so the result is compatible with Theorem \ref{2thm4}.
\end{example}


\subsection{Proof of Theorem \ref{2thm5}}
Let $f\in\mathcal{C} (-1/2)$. Then we have
$$
\big | (k+1)|a_{k+1}|-k|a_k| \big | \leq k+1 \mbox{ for $k\geq 1$},
$$
 by \eqref{2thm-eq4}. Here $a_1=1.$ Using the triangle inequality, we deduce that for $n \geq m$
\begin{align*}
\big | |n|a_n|-m|a_m| \big |  &= \left |\sum_{k=m}^{n-1}(k+1)|a_{k+1}|-k|a_k| \right |\\
&\leq \sum_{k=m}^{n-1}\big | (k+1)|a_{k+1}|-k|a_k| \big |  \\
&\le \sum_{k=m}^{n-1}(k+1)=\cfrac{(n^2-m^2)+(n-m)}{2}.
\end{align*}
Clearly the equality holds for $f \in  \mathcal{C} (-1/2)$  defined by \eqref{2condirection} 
in which the coefficient of $z^n$ is $(n+1)/2.$  \hfill{$\Box$}


\begin{remark}
It would be interesting to see an improved version of our results in which the upper bounds
are depending upon sharp absolute constant $M$.
\end{remark}

\section*{Acknowledgments}
The authors thank the referee for many useful comments. The  work of the second author is supported 
by Mathematical Research Impact Centric 
Support (MATRICS) of DST, India  (MTR/2017/000367).

\end{document}